\newcommand{\Rhom}{R^{\mathrm{hom}}}
\title{An improved lower bound on the Shannon capacities of complements of odd cycles}
\author{Daniel G. Zhu}
\address{Department of Mathematics, Princeton University, Princeton, NJ 08544, USA}
\email{zhd@princeton.edu}
\begin{document}
\begin{abstract}
Improving a 2003 result of Bohman and Holzman, we show that for $n \geq 1$, the Shannon capacity of the complement of the $2n+1$-cycle is at least $(2^{r_n} + 1)^{1/r_n} = 2 + \Omega(2^{-r_n}/r_n)$, where $r_n = \exp(O((\log n)^2))$ is the number of partitions of $2(n-1)$ into powers of $2$. We also discuss a connection between this result and work by Day and Johnson in the context of graph Ramsey numbers.
\end{abstract}
\maketitle
\section{Introduction}
For a (finite, simple) graph $G$, let $\alpha(G)$ denote its independence number, $G^{\boxtimes k}$ denote its $k$th strong power, and $\Theta(G) = \lim_{k \to \infty} \alpha(G^{\boxtimes k})^{1/k} = \sup_{k\geq 1} \alpha(G^{\boxtimes k})^{1/k}$ denote its Shannon capacity. A longstanding problem in extremal combinatorics concerns the determination of the Shannon capacities of odd cycles $C_{2n+1}$ and their complements $\bar C_{2n+1}$. In some sense, these are the simplest graphs for which computing the Shannon capacity is nontrivial, for the following two reasons:
\begin{itemize}
\item The strong perfect graph theorem, proven by Chudnovsky, Robertson, Seymour, and Thomas \cite{SPGT}, states that every graph is either perfect, in which case its Shannon capacity is known to be equal to its independence number, or contains an odd cycle or a complement of an odd cycle, both on at least $5$ vertices, as an induced subgraph.
\item For complements of odd cycles, it is additionally true that, out of all graphs $G$ with at most $2n+2$ vertices, the graph $\bar C_{2n+1}$ has the smallest Shannon capacity that is not exactly $0$, $1$, or $2$. This can be argued directly as follows: it is easy to show that $\Theta(G) = 0$ if and only if $G$ is the empty graph, $\Theta(G) = 1$ if and only if $G$ is a nonempty complete graph, and $\Theta(G) = 2$ if $G$ is not complete but has a bipartite complement. For any other graph $G$ with at most $2n+2$ vertices, its complement contains an odd cycle, and since there exist graph homomorphisms $C_{2m+3} \to C_{2m+1}$ for all positive integers $m$, we conclude that there exists a graph homomorphism $C_{2n+1} \to \bar G$. It follows that $\Theta(G) \geq \Theta(\bar C_{2n+1}) > 2$, where we use the fact that the Shannon capacity respects the cohomomorphism preorder (see e.g.\ \cite{WZ23} for more on this connection), and the fact (to be discussed below) that $\Theta(\bar C_{2n+1}) > 2$.
\end{itemize}
While it was once speculated that $\Theta(\bar C_{2n+1})$ could be equal to $2$ for large $n$, this was disproved in 2003 by Bohman and Holzman \cite{BH03}, who showed that $\Theta(\bar C_{2n+1}) \geq (2^{2^{n-1}}+1)^{1/2^{n-1}}$. In contrast, the best known upper bounds come from the Lov\'asz theta function \cite{L79}:
\[\Theta(\bar C_{2n+1}) \leq \vartheta(\bar C_{2n+1}) = \frac{1 + \cos \frac{\pi}{2n+1}}{\cos \frac{\pi}{2n+1}} = 2 + \frac{\pi^2}{8} n^{-2} + o(n^{-2}).\]

In this paper, we prove the following result:
\begin{thm} \label{thm:main}
Let $r_n$ (sequence \href{https://oeis.org/A000123}{A000123} in \cite{OEIS}) be the number of partitions of $2(n-1)$ into powers of $2$. Then $\alpha(\bar C_{2n+1}^{\boxtimes r_n}) \geq 2^{r_n} + 1$, and in particular, $\Theta(\bar C_{2n+1}) \geq (2^{r_n} + 1)^{1/r_n} = 2 + \Omega(2^{-r_n}/r_n)$.
\end{thm}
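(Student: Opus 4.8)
The plan is to exhibit, for each $n$, an explicit independent set of size $2^{r_n}+1$, built by an induction whose length shadows the binary-partition recurrence. Write $b(m)$ for the number of partitions of $m$ into powers of $2$, so that $r_n=b(2n-2)$; the standard identities $b(2m)=b(2m-1)+b(m)$ and $b(2m+1)=b(2m)$ give $r_n = r_{n-1}+r_{\lceil n/2\rceil}$ for $n\ge 2$ (with $r_1=1$), and this is the recurrence the construction will follow step for step.

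The first step is a reduction to a clean combinatorial statement: it suffices to produce a set $T_n\subseteq\{1,2,\dots,2n\}^{r_n}$ such that (i) $|T_n|=2^{r_n}$; (ii) any two distinct members of $T_n$ differ by exactly $1$ in at least one coordinate --- equivalently, $T_n$ is an independent set in the $r_n$-th strong power of the complement of the path on $\{1,\dots,2n\}$; and (iii) every member of $T_n$ has some coordinate with value in $\{1,2n\}$. Indeed, the complement of the path $1-2-\cdots-2n$ is exactly the subgraph of $\bar C_{2n+1}$ induced on $\{1,\dots,2n\}$, so (ii) makes $T_n$ independent in $\bar C_{2n+1}^{\boxtimes r_n}$; and since $0$ is adjacent in $C_{2n+1}$ to both $1$ and $2n$, property (iii) forces the all-zeros vector to be non-adjacent to every member of $T_n$. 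Hence $T_n\cup\{(0,\dots,0)\}$ is an independent set of size $2^{r_n}+1$, which gives the first claim; the bound $\Theta(\bar C_{2n+1})\ge(2^{r_n}+1)^{1/r_n}$ is then immediate from $\Theta(G)=\sup_k\alpha(G^{\boxtimes k})^{1/k}$, and the form $2+\Theta(2^{-r_n}/r_n)$ follows by expanding $\tfrac1{r_n}\log(2^{r_n}+1)$.

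Then I would construct $T_n$ by induction on $n$. The base cases $T_1=\{1,2\}$ and $T_2=\{(1,2),(2,4),(3,1),(4,3)\}\subseteq\{1,2,3,4\}^2$ (the five-point independent set of $C_5^{\boxtimes2}$ with its origin removed) satisfy (i)--(iii). For the step, split the $r_n=r_{n-1}+r_{\lceil n/2\rceil}$ coordinates into $r_{n-1}$ inherited coordinates and $r_{\lceil n/2\rceil}$ new ones and take $T_n=A_n\times B_n$, where $A_n$ is $T_{n-1}$ placed inside $\{1,\dots,2n-2\}\subseteq\{1,\dots,2n\}$ (which preserves (i) and (ii) verbatim), and $B_n$ is manufactured from the $\bar C_{2\lceil n/2\rceil+1}$ data --- essentially $T_{\lceil n/2\rceil}$. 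Properties (i) and (ii) pass to a Cartesian product of sets that have them, and property (iii) for $T_n$ holds as soon as the single factor $B_n$ carries (iii) with respect to the full window $\{1,\dots,2n\}$; the reason a half-size cycle enters is that $T_{\lceil n/2\rceil}$ natively inhabits a window of length $\approx n$, about half of $2n$, which is the amount of "stretching" that one new layer of coordinates can pay for.

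The main obstacle is precisely the construction of $B_n$, i.e.\ the propagation of property (iii). An order-preserving embedding of a short window into a longer one is useless here, because it cannot send both short-window endpoints to long-window endpoints, whereas (iii) for $B_n$ demands values equal to both $1$ and $2n$. What is needed is a genuine "spreading" operation on $T_{\lceil n/2\rceil}$: roughly, replace each coordinate's value $v$ by an affine function of $v$ while introducing the extra bit-coordinates (recorded in the $r_{\lceil n/2\rceil}$ new coordinates) that double the effective window and pin the two extreme values onto $1$ and $2n$. The delicate point is that a naive doubling destroys (ii) --- it turns differences of $1$ into differences of $2$ --- so the spreading must be compensated by the new coordinates, and one must check both that every pair of points still differs by exactly $1$ somewhere and that every point retains a coordinate valued in $\{1,2n\}$; this likely forces a strengthening of the inductive hypothesis (for instance, marking in each point a coordinate in which it attains a prescribed window endpoint, in a form stable under the spreading). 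Everything else is the arithmetic of $r_n = r_{n-1}+r_{\lceil n/2\rceil}$ and its derivation above. Finally, this same recursive scheme, reinterpreted as building an edge-colouring of a complete graph with no monochromatic short odd cycle, is what matches the lower-bound constructions for multicolour Ramsey numbers of odd cycles due to Day and Johnson that are alluded to in the abstract.
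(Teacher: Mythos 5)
Your opening reduction is exactly the paper's: a set $T_n\subseteq\{1,\dots,2n\}^{r_n}$ with your properties (i)--(iii) is precisely what the paper calls a \emph{skeleton} of the box $B(n^{r_n})$, and adjoining the origin gives the independent set. The recurrence $r_n=r_{n-1}+r_{\lceil n/2\rceil}$ is also correct. But the Cartesian product structure $T_n=A_n\times B_n$ is not salvageable, and the obstruction is not the ``spreading'' detail you flag. If (iii) is carried entirely by $B_n$ and the cardinalities are to multiply out to $2^{r_n}$, then $B_n$ must be a set of $2^{r_{\lceil n/2\rceil}}$ points in $\{1,\dots,2n\}^{r_{\lceil n/2\rceil}}$ satisfying (ii) and (iii) --- that is, a skeleton of $B(n^{r_{\lceil n/2\rceil}})$. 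This box has too few coordinates and is generally not good: already at $n=3$ you would need a skeleton of $B(3,3)$, which does not exist (apply the paper's Proposition~\ref{prop:removedim}, since $3>2^1$ and $B(3)$ has no skeleton, or see the paper's classification of two-dimensional good boxes). Were such a $B_n$ available, you would have proven the strictly stronger $\alpha(\bar C_{2n+1}^{\boxtimes r_{\lceil n/2\rceil}})\ge 2^{r_{\lceil n/2\rceil}}+1$, which is false. The ``extra bit-coordinates'' you hope to introduce during spreading do not exist: $B_n$ has only $r_{\lceil n/2\rceil}$ coordinates, exactly the count that $T_{\lceil n/2\rceil}$ already occupies, so there is no room to absorb the doubling.

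The paper's mechanism is genuinely not a product. It defines an $(a;b,c)$-\emph{expansion} as an injective $\psi\colon B(a)\times[2]\to B(b,c)$ carrying $\partial B(a)\times[2]$ into $\partial B(b,c)$ and preserving $\sim$, and proves (Proposition~\ref{prop:appex}) that if $S$ is a skeleton of $B(n_1,\dots,n_k)$ then $(\id^{k-1}\times\psi)(S\times[2])$ is a skeleton of $B(n_1,\dots,n_{k-1},n',n'')$. The explicit $(n;n+1,2n)$-expansion $\psi(x,1)=(x,2x)$, $\psi(x,2)=(x+2,2x-1)$ of Proposition~\ref{prop:ex} handles exactly the tension you identified: the first output coordinate grows the window only from $2n$ to $2(n+1)$ and carries the difference-of-one pairs, freeing the second, doubled coordinate to spread toward the new boundary. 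Crucially, one old coordinate together with a fresh bit is mapped jointly into two new coordinates, so the bit is interleaved across dimensions rather than living in a separate Cartesian factor; this is why the number of dimensions grows all the way to $r_n$ one step at a time rather than stopping at $r_{\lceil n/2\rceil}$.
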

Concretely, the first improvement on the existing literature appears at $n = 4$, where we show that $\Theta(\bar C_9) \geq 65^{1/6} \approx 2.00517$, as opposed to the previous bound of $257^{1/8} \approx 2.00097$. Asymptotically, as it is known that $\log r_n \sim (\log n)^2/(2 \log 2)$ \cite{M40}, this bound improves on the doubly exponential Bohman-Holzman bound but remains far away from any known upper bound.

\subsection*{Connection with graph Ramsey numbers}
After a preprint of this article was made public, the author was made aware of previous work of Day and Johnson \cite{DJ17} that constructed a graph coloring similar to the main construction of this article in the context of graph Ramsey numbers. To explain the relationship between these two contexts, we exhibit a connection between the Shannon capacity and multicolor graph Ramsey numbers, a straightforward extension of previous results of Erd\H{o}s, McEliece, and Taylor \cite{EMT71} (see also \cite{AO95}).

To state the connection, we recall that given graphs $H_1, H_2, \ldots, H_k$, the \vocab{graph Ramsey number} $R(H_1, H_2, \ldots, H_k)$ is the minimum $m$ such that in any $k$-edge-coloring of the complete graph $K_m$, one can, for some $i$, find a copy of $H_i$ with all edges of color $i$. In this context, however, it is more natural to work with what we will call a \vocab{homomorphic graph Ramsey number} $\Rhom(H_1, H_2, \ldots, H_k)$, which we define in the same way except that we now only need, for some $i$, a homomorphism from $H_i$ to the subgraph of $K_m$ consisting of the edges of color $i$. It is evident that we always have $\Rhom(H_1, H_2, \ldots, H_k) \leq R(H_1, H_2, \ldots, H_k)$, and that if all the $H_i$ are complete graphs $K_{r_i}$, both $\Rhom(H_1, H_2, \ldots, H_k)$ and $R(H_1, H_2, \ldots, H_k)$ are equal to the standard multicolor Ramsey number $R(r_1, r_2, \ldots, r_k)$. In the case where $H_1, H_2, \ldots, H_k$ are all equal to some graph $H$, further define $R_k(H) = R(H_1, H_2, \ldots, H_k)$ and $\Rhom_k(H) = \Rhom(H_1, H_2, \ldots, H_k)$.

We now make the following observation, which was stated in the case where the $H_i$ are all complete graphs by Erd\H{o}s, McEliece, and Taylor \cite{EMT71} and can be proven in the same way:
\begin{prop} \label{prop:rhomalpha}
If $H_1, H_2, \ldots, H_k$ are nonempty graphs, then $\Rhom(H_1, H_2, \ldots, H_k) - 1$ is the maximum of $\alpha(G_1 \boxtimes G_2 \boxtimes \cdots \boxtimes G_k)$ over tuples of graphs $(G_i)_{i \in [k]}$ such that there is no homomorphism $H_i \to \bar G_i$ for any $i \in [k]$. Moreover, if all the $H_i$ are equal and connected, the maximum can be achieved by taking all the $G_i$ to be equal.
\end{prop}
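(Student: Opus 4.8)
The plan is to follow the Erd\H{o}s--McEliece--Taylor argument for the complete-graph case essentially verbatim to get the main equality, and then to deduce the ``moreover'' by a small complementation trick. First I would fix notation: for a $k$-edge-colouring $c$ of a complete graph $K_m$, let $F_i$ denote the spanning subgraph of $K_m$ consisting of the colour-$i$ edges, so that $E(K_m)$ is the disjoint union of $E(F_1), \dots, E(F_k)$; by definition $c$ witnesses $\Rhom(H_1, \dots, H_k) > m$ exactly when there is no homomorphism $H_i \to F_i$ for any $i$.

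For the inequality $\Rhom(H_1,\dots,H_k) - 1 \le \max \alpha(G_1 \boxtimes \cdots \boxtimes G_k)$, I would set $m = \Rhom(H_1,\dots,H_k) - 1$, fix a colouring $c$ of $K_m$ witnessing $\Rhom > m$, and take $G_i = \bar F_i$. Then $\bar G_i = F_i$ receives no homomorphism from $H_i$, so $(G_1,\dots,G_k)$ is admissible, and the diagonal $\{(v,v,\dots,v) : v \in V(K_m)\}$ is independent in $G_1 \boxtimes \cdots \boxtimes G_k$, since for distinct $v, w$ the edge $vw$ has some colour $j$, so $vw \notin E(G_j)$ and the two diagonal vertices already fail to be adjacent in coordinate $j$; this gives $\alpha(G_1 \boxtimes \cdots \boxtimes G_k) \ge m$. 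Conversely, given any admissible tuple $(G_i)$ and a maximum independent set $S$ of $G_1 \boxtimes \cdots \boxtimes G_k$, I would colour the complete graph on vertex set $S$ by assigning to each pair $s \ne s'$ some coordinate $i$ with $s_i \ne s'_i$ and $s_i \sim_{\bar G_i} s'_i$, which exists because $s$ and $s'$ are non-adjacent in the product. If the colour-$i$ class contained a homomorphic copy of $H_i$, post-composing it with the $i$th projection $s \mapsto s_i$ would produce a homomorphism $H_i \to \bar G_i$, which is forbidden; so this colouring witnesses $\Rhom(H_1,\dots,H_k) > |S| = \alpha(G_1 \boxtimes \cdots \boxtimes G_k)$, and the two bounds together give equality.

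For the ``moreover'', assume $H_1 = \dots = H_k = H$ with $H$ connected, set $m = \Rhom_k(H) - 1$, and fix a witnessing colouring of $K_m$. The plan is to collapse the tuple $(\bar F_1, \dots, \bar F_k)$ above into the single graph $G := \overline{F_1 \sqcup F_2 \sqcup \cdots \sqcup F_k}$, the complement of the disjoint union of the colour classes (regarded as a graph on $k$ disjoint copies of $V(K_m)$). Then $\bar G = F_1 \sqcup \cdots \sqcup F_k$, and because $H$ is connected every homomorphism $H \to \bar G$ would have image contained in a single $F_i$, which is impossible since the colouring has no colour-$i$ homomorphic copy of $H$; hence the constant tuple $(G, \dots, G)$ is admissible. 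Writing a vertex of $G$ as a pair $(i, u)$ with $i \in [k]$ and $u \in V(K_m)$, the set $\{((1,v),(2,v),\dots,(k,v)) : v \in V(K_m)\}$ is independent in $G^{\boxtimes k}$, because for distinct $v, w$, taking $j$ to be the colour of $vw$, the vertices $(j,v)$ and $(j,w)$ of $G$ are distinct and non-adjacent (as $vw \in E(F_j)$), so the two $k$-tuples fail to be adjacent in coordinate $j$; thus $\alpha(G^{\boxtimes k}) \ge m$, and together with the trivial reverse inequality (a special case of what was proven above) this shows the maximum is attained at the constant tuple. I expect the one genuinely non-mechanical step to be this last observation --- that when $H$ is connected the disjoint union of the colour classes is still free of homomorphic copies of $H$, so that the single graph $G$ inherits admissibility --- with the rest being a careful but routine unravelling of the definitions of the strong product and the complement.
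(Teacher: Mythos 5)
Your proof is correct and takes essentially the same route as the paper's: both directions of the main equality amount to the same diagonal/blowup correspondence (your direct edge-by-edge colouring is just an unpacked version of the paper's blowup-union identity $\bigcup_i G'_i = \overline{G_1 \boxtimes \cdots \boxtimes G_k}$), and your ``moreover'' uses the same complemented-disjoint-union graph $G = \overline{F_1 \sqcup \cdots \sqcup F_k}$, with connectedness of $H$ playing the identical role. The only cosmetic difference is that the paper proves the ``moreover'' by comparing $\alpha$'s for an arbitrary admissible tuple via an induced-subgraph inclusion, whereas you extract a witnessing colouring first and check the diagonal directly; these are interchangeable.
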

Now, by setting all the $H_i$ to be equal and considering the limit $k \to \infty$, we obtain a relation between the asymptotics of homomorphic graph Ramsey numbers and Shannon capacities. In the complete graph case, this fact was first observed by Alon and Orlitsky \cite{AO95}.
\begin{cor} \label{cor:ramseyshannon}
For a connected graph $H \neq K_1$, we have
\[\lim_{k \to \infty} \Rhom_k(H)^{1/k} = \sup_{k \geq 1}{(\Rhom_k(H)-1)^{1/k}} = \sup_{H \nrightarrow \bar G} \Theta(G),\]
where the second supremum is taken over all graphs $G$ for which there is no homomorphism $H \to \bar G$. This quantity can be infinite.
\end{cor}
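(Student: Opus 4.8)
The backbone is Proposition~\ref{prop:rhomalpha} applied with all $H_i$ equal to the given connected $H$. Write $a_k := \Rhom_k(H) - 1$. I would first observe that $(a_k)_{k\ge 1}$ is supermultiplicative: using the tuple form of the proposition, choose tuples $(F_i)_{i\in[k]}$ and $(F'_j)_{j\in[\ell]}$ attaining $a_k$ and $a_\ell$ (so no $H\to\bar F_i$ and no $H\to\bar F'_j$), concatenate them into an admissible $(k+\ell)$-tuple, and note that
\[a_{k+\ell}\ \ge\ \alpha\bigl(F_1\boxtimes\cdots\boxtimes F_k\boxtimes F'_1\boxtimes\cdots\boxtimes F'_\ell\bigr)\ \ge\ \alpha(F_1\boxtimes\cdots\boxtimes F_k)\cdot\alpha(F'_1\boxtimes\cdots\boxtimes F'_\ell)\ =\ a_k a_\ell,\]
the middle inequality being the usual product of independent sets. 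Since $H$ is connected and $H\ne K_1$ it has an edge, so $a_1=\chi(H)-1\ge 1$ and hence $a_k\ge 1$ for all $k$; thus $\log a_k$ is a nonnegative superadditive sequence, and Fekete's lemma yields $\lim_k a_k^{1/k}=\sup_k a_k^{1/k}=:L\in[1,\infty]$. From $1\le a_k+1\le 2a_k$ we get $a_k^{1/k}\le\Rhom_k(H)^{1/k}\le 2^{1/k}a_k^{1/k}$, so $\lim_k\Rhom_k(H)^{1/k}=L$ as well, which is the first claimed equality. (Keeping the ``$-1$'' in the second expression is essential: already $\Rhom_1(H)^{1/1}=\chi(H)$ can exceed $L$.)

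The substance is the identification $L=\sup_{H\nrightarrow\bar G}\Theta(G)$. For ``$\le$'' I would invoke the ``moreover'' clause of Proposition~\ref{prop:rhomalpha}: as $H$ is connected, for each $k$ there is a single graph $G_k$ with $H\nrightarrow\bar G_k$ and $a_k=\alpha(G_k^{\boxtimes k})$; since $\Theta(G_k)=\sup_m\alpha(G_k^{\boxtimes m})^{1/m}\ge\alpha(G_k^{\boxtimes k})^{1/k}$, this gives $a_k^{1/k}\le\Theta(G_k)\le\sup_{H\nrightarrow\bar G}\Theta(G)$, and taking the supremum over $k$ gives $L\le\sup_{H\nrightarrow\bar G}\Theta(G)$. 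Conversely, fix any $G$ with $H\nrightarrow\bar G$: for each $m$ the constant tuple $(G,\dots,G)$ is admissible, so $\alpha(G^{\boxtimes m})\le a_m$, whence $\Theta(G)=\sup_m\alpha(G^{\boxtimes m})^{1/m}\le\sup_m a_m^{1/m}=L$; taking the supremum over $G$ completes the two displayed equalities, all three quantities lying in $[1,\infty]$.

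Finally, the assertion that this common value can be infinite is the one part that is not purely formal. Taking $H=K_r$ illustrates what is at stake: there is no homomorphism $K_r\to\bar G$ precisely when $\alpha(G)<r$, so by the equalities just proved the common value for $H=K_r$ equals $\lim_k R_k(r)^{1/k}=\sup_{\alpha(G)<r}\Theta(G)$, and its finiteness for $r\ge 3$ is the well-known open problem on the growth of multicolour Ramsey numbers. More generally, if $H\nrightarrow\bar G$ then $\bar G$ has no $K_{\chi(H)}$ subgraph (as $H\to K_{\chi(H)}$), so every admissible $G$ satisfies $\alpha(G)<\chi(H)$; hence any $H$ witnessing the value $+\infty$ must come equipped with graphs $G$ of bounded independence number whose Shannon capacities tend to infinity — largeness that can only be certified by an explicit construction rather than by any general bound. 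Producing such an $H$ together with its family of witnesses is the step I expect to be the real obstacle, and is where I would draw on the Ramsey-theoretic construction alluded to in the discussion below.
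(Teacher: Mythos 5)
Your proof of the two displayed equalities is correct, but takes a genuinely different route from the paper's. The paper avoids Fekete's lemma entirely: it first reduces $\lim \Rhom_k(H)^{1/k}$ to $\lim (\Rhom_k(H)-1)^{1/k}$ using $\Rhom_k(H) \geq 2$, and then closes a cyclic chain
\[
\limsup_k (\Rhom_k(H)-1)^{1/k} \leq \sup_k (\Rhom_k(H)-1)^{1/k} = \sup_{H\nrightarrow\bar G}\Theta(G) \leq \liminf_k (\Rhom_k(H)-1)^{1/k},
\]
where the middle equality is the ``moreover'' clause of Proposition~\ref{prop:rhomalpha} combined with a swap of suprema, and the last inequality is the constant-tuple observation. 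The cycle forces $\limsup = \liminf$ with no need to establish existence of the limit separately. You instead prove convergence up front via supermultiplicativity of $a_k = \Rhom_k(H)-1$ (using the tuple form of the proposition and concatenation) and Fekete, then identify $L = \sup_k a_k^{1/k}$ with $\sup_G\Theta(G)$ by the same two applications of the proposition. Both are valid; yours yields the mildly stronger byproduct that $(a_k)$ is supermultiplicative, while the paper's is shorter and needs no appeal to Fekete.

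One caution about your final paragraph: you treat the clause ``this quantity can be infinite'' as a substantive claim requiring a construction, and even suggest the witness would need to come from the Ramsey-theoretic work discussed elsewhere. That misreads the statement. As the introduction notes, whether $c_{K_3}=\infty$ is a famous open problem, and one cannot rule out $c_H=\infty$ for all non-bipartite $H$; no example with $c_H=\infty$ is known or claimed. The phrase is simply a parenthetical reminder that the three quantities are equal as elements of $[1,\infty]$, i.e., that the formula is not silently asserting finiteness. There is nothing to prove there, and the paper's appendix accordingly does not attempt to.
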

For completeness, we include proofs of \cref{prop:rhomalpha,cor:ramseyshannon} in \hyperref[sec:appendix]{an appendix}.

Despite this equality, very little is known about the actual value of the quantity in \cref{cor:ramseyshannon}, which we will abbreviate as $c_H$. Indeed, whether we have $c_{K_3} = \infty$ is a famous open problem. In fact, we cannot even rule out the case where $c_H = \infty$ for all non-bipartite $H$ (for bipartite $H$ it is easy to see that $\Rhom_k(H) = 2$ and thus $c_H = 1$).

Using this language, the work of Day and Johnson can be reinterpreted as follows: for $n \geq 2$ a positive integer, they find a construction (equivalent to that described this paper) showing that $\alpha(\bar C_{2n_1+1} \boxtimes \bar C_{2n_2+1} \boxtimes \cdots \boxtimes \bar C_{2n_k+1}) > 2^k$ for some positive integers $k$ and $n_1, n_2, \ldots, n_k \geq n$. Then, applying what is effectively \cref{prop:rhomalpha} with $G_i = \bar C_{2n_i+1}$ and $H_i = C_{2n-1}$, they find $\Rhom_k(C_{2n-1}) > 2^k + 1$, which is equivalent to Theorem 2 in \cite{DJ17}. This in turn implies 
\[c_{C_{2n-1}} = \sup_{k \geq 1}{(\Rhom_k(C_{2n-1})-1)^{1/k}} > 2,\]
which is equivalent to Theorem 4 in \cite{DJ17}. An alternative way to proceed would have been to observe that their construction implies $\Theta(\bar C_{2n+1}) > 2$ after a short argument (basically \cref{prop:down} of this paper), and concluding $c_{C_{2n-1}} > 2$ from \cref{cor:ramseyshannon}.

There are many questions that remain in this area. Apart from computing $c_H$, one that stands out to the author is the following:
\begin{ques}
For a connected graph $H \neq K_1$, must we have
\[\lim_{k \to \infty} \Rhom_k(H)^{1/k} = \lim_{k \to \infty} R_k(H)^{1/k}?\]
\end{ques}
We note, however, that an affirmative answer could turn out to be uninteresting in the sense that both limits could be $1$ for all bipartite $H$ and $\infty$ for all other $H$, which would give relatively little information on the relationship between homomorphic and non-homomorphic graph Ramsey numbers.

\section{Proof of \texorpdfstring{\cref{thm:main}}{Theorem \ref{thm:main}}}
We let $[n] = \set{1,2,\ldots,n}$ as usual. We will let $x$ and $y$ denote vectors and $x_i$ and $y_i$ denote their components.

For positive integers $k, n_1, \ldots, n_k$, define the \vocab{box} $B(n_1,\ldots,n_k) = [2n_1] \times \cdots \times [2n_k]$. We will use exponents to denote multiplicity; for example, $B(2^2, 3) = B(2, 2, 3)$. Let $\partial B(n_1,\ldots,n_k)$ be the elements $x \in B(n_1,\ldots,n_k)$ such that $x_i \in \set{1,2n_i}$ for some $i \in [k]$. Furthermore, for $x, y \in B(n_1,\ldots,n_k)$ write $x \sim y$ if $x = y$ or $\abs{x_i - y_i}= 1$ for some $i \in [k]$. Finally, call a box $B(n_1,\ldots,n_k)$ \vocab{good} if there is a subset $S \subseteq \partial B(n_1,\ldots,n_k)$ (which we will call a \vocab{skeleton}) of size $2^k$ such that $x \sim y$ for all $x, y \in S$. Note that whether a box is good is independent of the order of the $n_i$, so in the remainder of the paper we will identify boxes with the same dimensions but in a different order.

\begin{rmk}
If $x \neq y$ and $x \sim y$, then $x$ and $y$ cannot be equal modulo $2$. Thus, if $2^k$ is replaced with $2^k+1$ in the definition of a good box, no box can be good.
\end{rmk}

\begin{prop} \label{prop:goodtobound}
If $B(n^k)$ is good, then $\alpha(\bar C_{2n+1}^{\boxtimes k}) \geq 2^k + 1$.
\end{prop}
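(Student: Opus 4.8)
The plan is to exhibit an explicit independent set of size $2^k+1$ in $\bar C_{2n+1}^{\boxtimes k}$ by taking a skeleton and adjoining one extra vertex. I would begin by fixing the identification of the vertex set of $C_{2n+1}$ with $\mathbb Z/(2n+1)\mathbb Z = \set{0,1,\dots,2n}$, in which two residues $i,j$ are adjacent precisely when $i - j \equiv \pm 1 \pmod{2n+1}$. Under this labeling the box $B(n^k)$ becomes exactly $\set{1,2,\dots,2n}^k$, a subset of the vertex set $\set{0,1,\dots,2n}^k$ of $\bar C_{2n+1}^{\boxtimes k}$. Next I would record the (entirely routine) translation of definitions: since the non-neighbors of a vertex in $\bar C_{2n+1}$ are itself and its two neighbors in $C_{2n+1}$, a set $T$ of vertices of $\bar C_{2n+1}^{\boxtimes k}$ is independent if and only if for every pair of distinct $x,y \in T$ there is a coordinate $i$ with $x_i - y_i \equiv \pm 1 \pmod{2n+1}$.

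Given a skeleton $S \subseteq \partial B(n^k)$ of size $2^k$, I would then claim that $T = S \cup \set{\mathbf 0}$, where $\mathbf 0 = (0,\dots,0)$, is an independent set. This splits into two cases. For a pair of distinct $x,y \in S$, the skeleton property supplies a coordinate $i$ with $\abs{x_i - y_i} = 1$ as integers; since $1 \le x_i, y_i \le 2n$, this is also a difference of $\pm 1$ modulo $2n+1$, as required. For a pair $x \in S$ and $\mathbf 0$, the fact that $x \in \partial B(n^k)$ gives a coordinate $i$ with $x_i \in \set{1, 2n}$, and because $1 \equiv 1$ and $2n \equiv -1 \pmod{2n+1}$ we get $x_i - 0 \equiv \pm 1 \pmod{2n+1}$, again as required. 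Finally $\mathbf 0 \notin B(n^k) \supseteq S$, so $\abs{T} = 2^k + 1$, whence $\alpha(\bar C_{2n+1}^{\boxtimes k}) \ge 2^k + 1$.

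I do not expect a real obstacle: the combinatorial content has been front-loaded into the notion of a good box, and what remains is the unwinding above. The only delicate point is the bookkeeping in the second case---checking that the hypothesis $S \subseteq \partial B(n^k)$ is exactly what is needed to make the adjoined vertex $\mathbf 0$ non-adjacent to every element of $S$---together with the small but essential observation that $\mathbf 0$ lies outside the box and so genuinely enlarges $S$.
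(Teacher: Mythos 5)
Your proof is correct and is essentially the same argument as the paper's, just spelled out in more detail: both identify the vertices of $\bar C_{2n+1}$ with $\mathbb Z/(2n+1)\mathbb Z$, take $T = S \cup \{0\}$ for a skeleton $S$, and verify independence by checking the skeleton--skeleton pairs via the $x \sim y$ condition and the skeleton--$0$ pairs via membership in $\partial B(n^k)$.
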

\begin{proof}
Identify the vertices of $\bar C_{2n+1}$ with $\setz/(2n+1)\setz$. Then, if $S$ is a skeleton, we claim that $S \cup \set{0}$ is an independent set of $\bar C_{2n+1}^{\boxtimes k}$. Indeed, if $x,y \in B(n^k)$ are such that $x \sim y$, then there is no edge between $x$ and $y$ in $\bar C_{2n+1}^{\boxtimes k}$. Additionally, if $x \in \partial B(n^k)$, then there is no edge between $x$ and $0$ in $\bar C_{2n+1}^{\boxtimes k}$.
\end{proof}

Thus it suffices to show that $B(n^{r_n})$ is good, which we will achieve by describing a general framework to show that various boxes are good. First, we show that the set of good boxes is ``downwards closed''.
\begin{prop} \label{prop:down}
If the box $B(n_1,n_2,\ldots,n_k)$ is good and $n_1 > 1$, then $B(n_1 - 1,n_2,\ldots,n_k)$ is good.
\end{prop}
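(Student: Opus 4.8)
The plan is to obtain the new skeleton as the image of the old one under an explicit map that acts only on the first coordinate. Define $\psi \colon [2n_1] \to [2(n_1-1)]$ by $\psi(v) = v$ for $1 \le v \le 2n_1 - 2$, together with $\psi(2n_1 - 1) = 2n_1 - 3$ and $\psi(2n_1) = 2n_1 - 2$; the hypothesis $n_1 > 1$ is used exactly to guarantee $2n_1 - 3 \ge 1$, so that $\psi$ indeed takes values in $[2(n_1-1)]$. The feature that makes $\psi$ work is that it sends consecutive integers to consecutive integers: $\abs{\psi(u) - \psi(v)} = 1$ whenever $\abs{u - v} = 1$ (the only "turn" is $\cdots \to 2n_1-2 \to 2n_1-3 \to 2n_1-2$), so in particular $\psi$ never identifies two consecutive values. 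I would then set $\Psi(x_1, x_2, \ldots, x_k) = (\psi(x_1), x_2, \ldots, x_k)$ and claim that whenever $S$ is a skeleton of $B(n_1, \ldots, n_k)$, its image $\Psi(S)$ is a skeleton of $B(n_1 - 1, \ldots, n_k)$ of the same size; note that this uses a single map, independent of $S$.

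To prove the claim I would check the three defining properties of a skeleton in turn. First, $\Psi$ respects $\sim$: if $x \sim y$ is witnessed by a coordinate $i \ge 2$, that coordinate is untouched and still witnesses $\Psi(x) \sim \Psi(y)$; if it is witnessed by $i = 1$, i.e.\ $\abs{x_1 - y_1} = 1$, then $\abs{\psi(x_1) - \psi(y_1)} = 1$ as well; and $x = y$ gives $\Psi(x) = \Psi(y)$. Second, $\Psi(S) \subseteq \partial B(n_1 - 1, \ldots, n_k)$: each $x \in S$ has $x_i \in \set{1, 2n_i}$ for some $i$, and if $i \ge 2$ this persists, while if $i = 1$ then $\psi(x_1) \in \set{\psi(1), \psi(2n_1)} = \set{1, 2(n_1 - 1)}$, which is again extremal. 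Third, $\Psi$ is injective on $S$: if $x \ne y$ in $S$ but $\Psi(x) = \Psi(y)$, then $x$ and $y$ agree in every coordinate $\ge 2$ (since $\Psi$ is the identity there), so they differ in the first coordinate only, whence the relation $x \sim y$ forces $\abs{x_1 - y_1} = 1$; but $\psi$ separates consecutive values, so $\psi(x_1) \ne \psi(y_1)$, a contradiction. Combining the third point with $\abs{S} = 2^k$ gives $\abs{\Psi(S)} = 2^k$, so $\Psi(S)$ is a skeleton and $B(n_1 - 1, \ldots, n_k)$ is good.

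The one delicate point — and the step I would be most careful about — is the choice of $\psi$ in service of injectivity. A more naive map, such as the "fold" that collapses $2n_1 - 2, 2n_1 - 1, 2n_1$ all to $2n_1 - 2$, still satisfies the first two properties, but it can merge two distinct skeleton points that agree off the first coordinate, breaking the size count. Keeping $\psi$ equal to the identity on $[2n_1 - 2]$ and then using just one short zigzag at the end to absorb the two surplus domain elements is precisely what preserves injectivity on consecutive integers while keeping the image inside $[2(n_1-1)]$ and sending endpoints to endpoints; once $\psi$ is pinned down, the remainder is routine tracking of which coordinate witnesses a given $\sim$-relation.
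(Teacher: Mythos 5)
Your map $\psi$ is literally the same as the paper's $\phi$ restricted to the first coordinate (the paper writes the top two values as $x_1 \mapsto x_1 - 2$, which agrees with your explicit assignments $2n_1-1 \mapsto 2n_1-3$ and $2n_1 \mapsto 2n_1-2$), and the three verification steps match the paper's almost line for line. The proof is correct and takes essentially the same approach; the only cosmetic difference is that you derive injectivity on $S$ by first invoking $x \sim y$ to force $\abs{x_1 - y_1} = 1$, whereas the paper observes directly that any collision forces $\abs{x_1 - y_1} = 2$ and hence $x \not\sim y$.
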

\begin{proof}
Consider the function $\phi \colon B(n_1,n_2,\ldots,n_k) \to B(n_1 - 1,n_2,\ldots,n_k)$ defined as follows:
\[\phi(x_1,x_2,\ldots,x_k) = \begin{cases}
(x_1,x_2,\ldots,x_k) & x_1 \leq 2(n_1 - 1) \\
(x_1-2,x_2,\ldots,x_k) & x_1 > 2(n_1 - 1). \\
\end{cases}\]
We claim that if $S$ is a skeleton of $B(n_1,n_2,\ldots,n_k)$, then $\phi(S)$ is a skeleton of $B(n_1-1,n_2,\ldots,n_k)$. To see this, note that $\phi$ sends $\partial B(n_1,n_2,\ldots,n_k)$ to $\partial B(n_1-1,n_2,\ldots,n_k)$, so $\phi(S) \subseteq \partial B(n_1-1,n_2,\ldots,n_k)$. Moreover, if $x\neq y$ and $\phi(x) = \phi(y)$, we must have $\abs{x_1-y_1} = 2$ and $x_i = y_i$ for all $1 < i \leq k$, implying that $x \not\sim y$. Thus, $\phi$ is injective on $S$, meaning that $\abs{\phi(S)} = 2^k$. Finally, consider distinct $x, y \in S$. If $\abs{x_i - y_i} = 1$ for some $i > 1$, we must have $\phi(x)\sim \phi(y)$ since $\phi$ leaves the $i$th coordinates unchanged. If $\abs{x_1 - y_1} = 1$, it is straightforward to verify that the first coordinates of $\phi(x)$ and $\phi(y)$ differ by $1$ as well, so we still have $\phi(x) \sim \phi(y)$.
\end{proof}

Now we describe a method for obtaining larger good boxes from the obviously good $B(1)$.

\begin{defn}
For positive integers $a$, $b$, and $c$, an \vocab{$(a; b, c)$-expansion} is a injective function $\psi \colon B(a) \times [2] \to B(b, c)$ mapping $\partial B(a) \times [2]$ to $\partial B(b, c)$ and such that for all $x, y \in B(a)$ with $x \sim y$ and $i, j \in [2]$, we have $\psi(x, i) \sim \psi(y, j)$.
\end{defn}
\begin{prop} \label{prop:appex}
If the box $B(n_1, \ldots, n_k)$ is good and an $(n_k; n', n'')$-expansion exists, then the box $B(n_1, \ldots, n_{k-1}, n', n'')$ is good.
\end{prop}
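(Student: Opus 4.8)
The plan is to transport a skeleton $S$ of $B(n_1,\ldots,n_k)$ through the given $(n_k; n', n'')$-expansion $\psi$, applying $\psi$ in the last coordinate. For $x \in S$ and $i \in [2]$, let $\Psi(x,i) \in B(n_1,\ldots,n_{k-1},n',n'')$ be the point whose first $k-1$ coordinates are $x_1,\ldots,x_{k-1}$ and whose last two coordinates are the two coordinates of $\psi(x_k, i) \in B(n',n'')$. I would then claim that
\[
T = \set{\Psi(x,i) : x \in S,\ i \in [2]}
\]
is a skeleton of $B(n_1,\ldots,n_{k-1},n',n'')$.

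There are three things to check. First, $\abs T = 2^{k+1}$, which is the size a skeleton of a $(k+1)$-dimensional box must have: since $\abs S = 2^k$, it suffices that $(x,i) \mapsto \Psi(x,i)$ be injective, and indeed $\Psi(x,i) = \Psi(y,j)$ forces $x_\ell = y_\ell$ for $\ell < k$ and $\psi(x_k,i) = \psi(y_k,j)$, whence $x_k = y_k$ and $i = j$ by injectivity of $\psi$. Second, $T \subseteq \partial B(n_1,\ldots,n_{k-1},n',n'')$: for $x \in S \subseteq \partial B(n_1,\ldots,n_k)$, either $x_\ell \in \set{1, 2n_\ell}$ for some $\ell < k$, which survives in $\Psi(x,i)$, or $x_k \in \set{1, 2n_k}$, i.e.\ $(x_k, i) \in \partial B(n_k) \times [2]$, and then $\psi(x_k,i) \in \partial B(n',n'')$ by the definition of an expansion, so one of the last two coordinates of $\Psi(x,i)$ is extremal.

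The heart of the argument is the third point: for all $x,y \in S$ and $i, j \in [2]$ we need $\Psi(x,i) \sim \Psi(y,j)$. If $x \neq y$, then $x \sim y$ gives $\abs{x_m - y_m} = 1$ for some $m \in [k]$; if $m < k$ the points $\Psi(x,i)$ and $\Psi(y,j)$ differ by $1$ in coordinate $m$, and if $m = k$, then $x_k \sim y_k$, so $\psi(x_k,i) \sim \psi(y_k,j)$ by the expansion property, and since $x_k \neq y_k$ these two values are distinct by injectivity of $\psi$, hence differ by exactly $1$ in one coordinate, which becomes one of the last two coordinates of $\Psi(x,i)$ and $\Psi(y,j)$. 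If instead $x = y$, then $\Psi(x,i)$ and $\Psi(y,j)$ agree on the first $k-1$ coordinates and have last two coordinates $\psi(x_k, i)$ and $\psi(x_k, j)$; applying the expansion property with both first arguments equal to $x_k$ (legitimate since $x_k \sim x_k$) gives $\psi(x_k,i) \sim \psi(x_k,j)$, which immediately yields $\Psi(x,i) \sim \Psi(x,j)$.

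The one point requiring attention, and the only real obstacle, is that the expansion condition only guarantees the relation $\sim$, which permits equality of images; so whenever one needs an actual coordinate difference of $1$ rather than just $\sim$, one must additionally invoke the injectivity of $\psi$ to rule out that the relevant images coincide. Beyond this bookkeeping, every step is immediate from the definitions.
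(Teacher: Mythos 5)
Your proof is correct and takes essentially the same route as the paper: push the skeleton through $\id^{k-1}\times\psi$, then check size, boundary containment, and the $\sim$ relation by the same case analysis. You are slightly more explicit than the paper in noting that injectivity of $\psi$ is what upgrades $\psi(x_k,i)\sim\psi(y_k,j)$ to an actual coordinate difference of $1$, a point the paper leaves implicit in the $\sim$ verification.
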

\begin{proof}
If $S$ is a skeleton of $B(n_1,\ldots,n_k)$ and $\psi$ is an $(n_k;n',n'')$ expansion, we claim that $S' = (\id^{k-1} \times \psi)(S \times [2])$ is a skeleton of $B(n_1,\ldots,n_{k-1},n',n'')$. For the remainder of the proof we will write $x,y \in S$ as $(x',x_k)$ and $(y',y_k)$, where $x',y' \in B(n_1,\ldots,n_{k-1})$.

For a given $x \in S$ and $i \in [2]$, note that we must have $x' \in \partial B(n_1,\ldots,n_{k-1})$ or $x_k \in \partial B(n_k)$. In either case, it follows that $(x', \psi(x_k, i)) \in \partial B(n_1,\ldots,n_{k-1},n',n'')$, since $\psi$ maps $\partial B(n_k) \times [2]$ to $\partial B(n',n'')$. The fact that $\abs{S'} = 2^{k+1}$ follows from the injectivity of $\psi$. Finally, for distinct $(x,i),(y,j) \in S \times [2]$, we must have $x' \neq y'$ and $x' \sim y'$, or $x_k \sim y_k$ and $(x_k,i) \neq (y_k,j)$. In both cases, we have $(x', \psi(x_k, i)) \sim (y', \psi(x_k, j))$, since $x_k \sim y_k$ implies $\psi(x_k,i) \sim \psi(y_k,j)$.
\end{proof}
\begin{prop} \label{prop:ex}
For $n \geq 1$, an $(n; n+1, 2n)$-expansion exists.
\end{prop}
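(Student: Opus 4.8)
The plan is to write down an explicit expansion and check the four defining properties directly. I would set $\psi \colon B(n) \times [2] \to B(n+1, 2n)$ to be
\[\psi(x, 1) = (x+2,\ 2x-1), \qquad \psi(x, 2) = (x,\ 2x).\]
The guiding idea is that $B(n) \times [2] = [2n] \times [2]$ is a ``path of width two'' (its $\sim$-graph is the strong product of a path with an edge), and we want the $2n$ rungs $\{\psi(x,1), \psi(x,2)\}$ to march across the target box $[2n+2] \times [4n]$. Since that box is longest in its second coordinate, of length $4n = 2\cdot(2n)$, we march in that coordinate, assigning the two points of rung $x$ the consecutive second coordinates $2x-1$ and $2x$. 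This instantly forces $\psi(x,1) \sim \psi(x,2)$ (second coordinates $2x-1$, $2x$) and $\psi(x,2) \sim \psi(x+1,1)$ (second coordinates $2x$, $2x+1$); the remaining three cross-pairs between consecutive rungs then must be handled by the first coordinate, and a short analysis of those constraints together with the boundary conditions pins the first coordinates of the two sub-walks to $x \mapsto x+2$ and $x \mapsto x$, giving the formula above.

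The verification is routine. First, the image lies in the box, since $x+2 \le 2n+2$ and $2x \le 4n$. Second, $\psi$ is injective because the values $2x-1$ are distinct and odd while the values $2x$ are distinct and even, so all $4n$ second coordinates that occur are distinct. Third, for the boundary condition, $\psi(1,1) = (3,1)$ and $\psi(2n,2) = (2n, 4n)$ have an extreme second coordinate, while $\psi(1,2) = (1,2)$ and $\psi(2n,1) = (2n+2, 4n-1)$ have an extreme first coordinate, so $\partial B(n) \times [2]$ is sent into $\partial B(n+1, 2n)$. Fourth, for $x \sim y$: if $x=y$ then $\psi(x,1) \sim \psi(x,2)$ as noted; if $y = x+1$ then the pairs $\psi(x,1)\psi(x+1,1)$, $\psi(x,1)\psi(x+1,2)$, $\psi(x,2)\psi(x+1,2)$ have first coordinates differing by exactly $1$ (respectively $x+2$ vs.\ $x+3$, $x+2$ vs.\ $x+1$, $x$ vs.\ $x+1$), while $\psi(x,2)\psi(x+1,1)$ has second coordinates differing by $1$ ($2x$ vs.\ $2x+1$); the case $y = x-1$ is symmetric.

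The one point that genuinely needs thought — and where the naive approach fails — is the boundary condition, which is why I would emphasize it. The obvious attempt is to march in the \emph{first} coordinate, placing rung $x$ in columns near $x$; but then each of the two end-rungs must reach the boundary of the $(2n+2) \times (4n)$ box, and unwinding the constraints shows that some walk of length $2n-1$ in the first coordinate would have to join the values $1$ and $2n+2$, which are at distance $2n+1 > 2n-1$ — impossible (and the parity obstruction rules out joining $1$ to $1$). Marching instead in the longer second coordinate leaves exactly enough room (a walk of length $2n-1$ against a coordinate of length $4n$) for the endpoints to touch the boundary, and getting this trade-off right is the crux of the construction.
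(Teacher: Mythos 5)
Your construction is the same as the paper's up to swapping the labels $1 \leftrightarrow 2$ in the second argument of $\psi$ (the paper uses $\psi(x,1) = (x,2x)$ and $\psi(x,2) = (x+2,2x-1)$), which is immaterial since the definition of an expansion is symmetric in that argument, and your verification of the four required properties is correct and complete.
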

\begin{proof}
The expansion is given by $\psi(x, 1) = (x, 2x)$ and $\psi(x, 2) = (x+2, 2x-1)$, which can be easily verified to satisfy the conditions. For an illustration, see \cref{ex1}.
\end{proof}
\begin{figure}[tbp]\centering
\subcaptionbox{\label{ex1}}{%
\begin{tikzpicture}[scale=0.65,every node/.style={draw,circle,inner sep=0.5mm}]
\foreach \x in {1,...,8}
\foreach \y in {1,...,12}
{\node at (\x,\y) {};}
\begin{scope}[every node/.append style={fill}]
\newcommand{\kfour}{(3,1) node {} -- (4,3) node {} -- (2,4) node {} -- (1,2) node {} -- (3,1) -- (2,4) (4,3)--(1,2)}
\draw \kfour;
\draw[shift={(1,2)}] \kfour;
\draw[shift={(2,4)}] \kfour;
\draw[shift={(3,6)}] \kfour;
\draw[shift={(4,8)}] \kfour;
\end{scope}
\end{tikzpicture}}\hspace{4em}\subcaptionbox{\label{ex2}}{%
\begin{tikzpicture}[scale=0.65,every node/.style={draw,circle,inner sep=0.5mm}]
\foreach \x in {1,...,8}
\foreach \y in {1,...,8}
{\node at (\x,\y) {};}
\begin{scope}[every node/.append style={fill}]
\newcommand{\kfour}{(3,1) node {} -- (4,3) node {} -- (2,4) node {} -- (1,2) node {} -- (3,1) -- (2,4) (4,3)--(1,2)}
\draw[shift={(2,2)}] \kfour;
\draw (2,4) node {}--(7,5) node {}--(6,5)--(2,4)--(3,4)--(7,5);
\draw (1,4) node {}--(8,5) node {}--(7,5)--(1,4)--(2,4)--(8,5);
\draw (5,2) node {}--(4,7) node {}--(4,6)--(5,2)--(5,3)--(4,7);
\draw (5,1) node {}--(4,8) node {}--(4,7)--(5,1)--(5,2)--(4,8);
\end{scope}
\end{tikzpicture}}
\caption{\subref{ex1} The $(3;4,6)$-expansion $\psi$ defined in the proof of \cref{prop:ex}. Filled circles represent the range of $\psi$ inside $B(4,6)$, and an edge is drawn between $\psi(x,i)$ and $\psi(y,j)$ for all $x,y\in B(3)$ and $i,j\in [2]$ with $x \sim y$ and $(x,i) \neq (y,j)$.\quad\subref{ex2} The $(3;4,4)$-expansion defined in \cref{rmk:bh}, depicted similarly to \subref{ex1}.}
\end{figure}
\begin{proof}[Proof of \cref{thm:main}]
Let $a_n$ be the number of partitions of $n$ into powers of $2$. We have $a_0 = 1$, $a_n = a_{n-1}$ if $n \geq 1$ is odd, and $a_n = a_{n-1} + a_{n/2}$ if $n \geq 2$ is even.\footnote{For a proof, note that if $n\geq 1$, the number of partitions of $n$ into powers of $2$ that include a $1$ is given by $a_{n-1}$, so $a_n - a_{n-1}$ is the number of partitions of $n$ into powers of $2$ that are all at least $2$, which is $0$ if $n$ is odd and $a_{n/2}$ if $n$ is even.} By definition, $r_n = a_{2(n-1)}$.

Consider starting with $B(1)$ and repeatedly applying \cref{prop:appex,prop:ex} to the smallest dimension, creating an infinite series of good boxes starting with $B(1)$, $B(2,2)$, $B(2,3,4)$, and $B(3,3,4,4)$. If we let $\bm{B}_n$ be the first box in this sequence where all dimensions are at least $n$, it is straightforward to show by induction that
\[\bm{B}_n = \begin{cases}
B(1)=B(1^{a_1}) & n=1,\\
B(2,2)=B(2^{a_2}) & n=2, \\
B((2m-1)^{a_{2m-1}}, (2m)^{a_{m}}, (2(m+1))^{a_{m+1}}, \ldots, (2(2m-2))^{a_{2m-2}}) & n =2m-1\geq 3 \text{ odd}, \\
B((2m)^{a_{2m}}, (2(m+1))^{a_{m+1}}, (2(m+2))^{a_{m+2}}, \ldots, (2(2m-1))^{a_{2m-1}}) & n = 2m \geq 4 \text{ even}.
\end{cases}\]
Thus, by \cref{prop:down} we find that $B(n^k)$ is good, where
\[k = a_n + \sum_{i = \floor{n/2} + 1}^{n-1} a_i = a_n + \sum_{i = \floor{n/2} + 1}^{n-1} (a_{2i} - a_{2i-2}) = a_n + a_{2(n-1)} - a_{2\floor{n/2}} = r_n.\]
Applying \cref{prop:goodtobound} completes the proof.
\end{proof}
\begin{rmk} \label{rmk:bh}
The language of good boxes and expansions can also be applied to the construction given in \cite{BH03}. In particular, there is a family of $(n; n+1, n+1)$-expansions given by
\[\psi(x, i) = \begin{cases*}
(x, n+1) & $x \leq n$ and $i = 1$, \\
(n+1, x+2) & $x > n$ and $i = 1$, \\
(2n+3-x, n+2) & $x \leq n$ and $i = 2$, \\
(n+2, 2n+1-x) & $x > n$ and $i = 2$. \\
\end{cases*}\]
(The $n = 3$ case is shown in \cref{ex2}.) Using these expansions, repeatedly applying \cref{prop:appex} to $B(1)$ yields a skeleton in the box $B(n^{2^{n-1}})$ for all positive integers $n$, which can be shown to be equivalent to the independent set constructed in \cite{BH03}.
\end{rmk}

\section{Further Remarks Concerning Good Boxes}
While a box $B(n^k)$ being good immediately implies a lower bound on $\Theta(\bar C_{2n+1})$, the converse is not necessarily true: the independent sets produced by good boxes contain a single distinguished point (namely $0$) that is different from every other point in every coordinate, and there is little reason to expect this to be true in an optimal configuration. Nonetheless, the question of which boxes are good is still interesting, and in this section we collect some results.
\begin{prop} \label{prop:adddim}
If $B(n_1,\ldots,n_k)$ is good, then $B(n_1,\ldots,n_k, n_{k+1})$ is good for any positive integer $n_{k+1}$.
\end{prop}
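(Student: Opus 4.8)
The plan is to directly exhibit a skeleton for the larger box by ``doubling'' a skeleton of the smaller one along the new coordinate. If $S$ is a skeleton of $B(n_1,\ldots,n_k)$, I would set
\[S' = (S \times \set{1}) \cup (S \times \set{2}) \subseteq B(n_1,\ldots,n_k,n_{k+1}),\]
which makes sense since $n_{k+1} \geq 1$ guarantees $\set{1,2} \subseteq [2n_{k+1}]$.

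Then I would verify the three defining properties of a skeleton. First, $\abs{S'} = 2\abs{S} = 2^{k+1}$: the two translates are disjoint since they differ in the final coordinate, and $x \mapsto (x,i)$ is injective. Second, $S' \subseteq \partial B(n_1,\ldots,n_k,n_{k+1})$: any $x \in S$ lies in $\partial B(n_1,\ldots,n_k)$, so $x_i \in \set{1, 2n_i}$ for some $i \in [k]$, and this same coordinate witnesses $(x,j) \in \partial B(n_1,\ldots,n_k,n_{k+1})$ for $j \in \set{1,2}$. Third, for distinct $(x,i), (y,j) \in S'$ I need $(x,i) \sim (y,j)$: if $i \neq j$, then the final coordinates differ by exactly $1$; if $i = j$, then $x \neq y$, so $x \sim y$ yields a coordinate in $[k]$ where $x$ and $y$ differ by $1$, which also witnesses $(x,i) \sim (y,j)$.

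Alternatively, the same conclusion follows formally from \cref{prop:appex} applied with the ``trivial'' $(n_k; n_k, n_{k+1})$-expansion $\psi(x, i) = (x, i)$, whose verification has essentially the same content. Either way there is no real obstacle; the only point to watch is the degenerate case $n_{k+1} = 1$, in which $2$ is itself a boundary coordinate, but this is harmless since membership in $\partial B$ is already secured by the first $k$ coordinates.
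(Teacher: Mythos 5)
Your construction $S' = S \times \{1,2\}$ is exactly the one the paper uses, and your verification of the three skeleton properties is correct. This is the same approach as the paper's proof, just spelled out in more detail.
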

\begin{proof}
If $S$ is a skeleton of $B(n_1,\ldots,n_k)$, then it is straightforward to show that $S \times \set{1,2}$ is a skeleton of $B(n_1,\ldots,n_k,n_{k+1})$.
\end{proof}
A partial converse is also true:
\begin{prop} \label{prop:removedim}
If $B(n_1,\ldots,n_k,n_{k+1})$ is good and $n_{k+1} > 2^k$, then $B(n_1,\ldots,n_k)$ is good.
\end{prop}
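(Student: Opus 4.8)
The plan is to start from a skeleton $S \subseteq \partial B(n_1,\ldots,n_k,n_{k+1})$ and manufacture a skeleton of $B(n_1,\ldots,n_k)$ by forgetting the last coordinate, after choosing carefully which ``lift'' of each point to retain. Recall from the remark following the definition of a good box that distinct elements of a skeleton are inequivalent modulo $2$; since $\abs{S} = 2^{k+1}$, this forces $S$ to contain exactly one point in each residue class of $(\setz/2\setz)^{k+1}$. Write $\pi\colon B(n_1,\ldots,n_k,n_{k+1}) \to B(n_1,\ldots,n_k)$ for the map forgetting the last coordinate.

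The key structural step is this: fix a residue class $c \in (\setz/2\setz)^k$. Exactly two points of $S$ have first $k$ coordinates congruent to $c$ — say $u$ with even last coordinate and $v$ with odd last coordinate (these correspond to the classes $(c,0)$ and $(c,1)$). Since $u \sim v$, they differ by exactly $1$ in some coordinate; that coordinate cannot lie among the first $k$, since otherwise $u$ and $v$ would have different parities there, contradicting that both reduce to $c$. Hence $\abs{u_{k+1} - v_{k+1}} = 1$: the two last coordinates arising from class $c$ are consecutive integers.

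Now I would invoke the hypothesis. Since $n_{k+1} > 2^k$ we have $2n_{k+1} > 2^{k+1} = \abs{S}$, so some value $t \in [2n_{k+1}]$ is not the last coordinate of any point of $S$ — this is the only place the hypothesis is used. By the previous paragraph, the two-element set of last coordinates coming from class $c$ is an interval of length $2$ avoiding $t$, so it lies in $\set{1,\ldots,t-1}$ or in $\set{t+1,\ldots,2n_{k+1}}$; accordingly call $c$ \emph{low} or \emph{high}. For a low class retain the point with even last coordinate, and for a high class retain the point with odd last coordinate; let $T$ be the resulting set. Then $T$ contains exactly one point from each class $c$, and points of distinct classes lie in distinct residue classes of $(\setz/2\setz)^k$, so $\pi(T)$ consists of $2^k$ distinct points, one per class. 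For the boundary condition: each $x \in T$ lies in $\partial B(n_1,\ldots,n_k,n_{k+1})$, so some coordinate of $x$ equals $1$ or the top of its range; if that coordinate were the last one, then either $x$ came from a low class and has even last coordinate in $\set{1,2n_{k+1}}$, forcing $x_{k+1}=2n_{k+1}$, which is impossible since $x_{k+1}\leq t-1$; or $x$ came from a high class and has odd last coordinate in $\set{1,2n_{k+1}}$, forcing $x_{k+1}=1$, impossible since $x_{k+1}\geq t+1\geq 2$. Hence $\pi(x) \in \partial B(n_1,\ldots,n_k)$. For the adjacency condition: any two distinct $x,y\in T$ satisfy $x\sim y$ in $S$, while $\abs{x_{k+1}-y_{k+1}}\geq 2$ (two low lifts have even last coordinates, two high lifts have odd last coordinates, and a low lift together with a high lift straddle $t$); therefore the coordinate on which $x$ and $y$ differ by $1$ is among the first $k$, so $\pi(x)\sim\pi(y)$. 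This exhibits $\pi(T)$ as a skeleton of $B(n_1,\ldots,n_k)$.

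I expect the only genuinely non-routine step to be spotting the structural observation — that inside each residue class the two lifts are forced to differ \emph{precisely in the last coordinate}, and by exactly $1$. This is exactly what prevents any class from straddling the omitted value $t$, and hence what makes the ``split along $t$, keep a single parity on each side'' recipe work; everything after it is parity bookkeeping against the definition of $\partial B$.
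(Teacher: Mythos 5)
Your proof is correct and follows essentially the same route as the paper's: you identify a value $t$ of the last coordinate unused by $S$, show that each residue class of $(\setz/2\setz)^k$ gives rise to a pair of points in $S$ differing only in the last coordinate (and by exactly $1$), and then retain the even/odd member according to which side of $t$ the pair falls on. This selection rule is precisely the paper's set $A$ (evens below the omitted value, odds above), so the two arguments coincide; the only cosmetic difference is that you phrase the size count via residue classes while the paper phrases it via an involution on $S$.
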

\begin{proof}
Let $S$ be a skeleton of $B(n_1,\ldots,n_k,n_{k+1})$. Since $n_{k+1} > 2^k$, there exists some $c \in [2n_{k+1}]$ such that no element $x \in S$ satisfies $x_{k+1} = c$. Let $A \subseteq [2n_{k+1}]$ consist of all even numbers less than $c$ and all odd numbers greater than $c$. Then we claim that
\[S' = \setmid{(x_1,\dots,x_k) \in B(n_1,\ldots,n_k)}{(x_1,\ldots,x_k,x_{k+1}) \in S\text{ for some }x_{k+1} \in A}\]
is a skeleton of $B(n_1,\ldots,n_k)$.

To see this, note that since $1$ and $2n_{k+1}$ are not in $A$, the set $S \cap (B(n_1,\ldots,n_k) \times A)$ is contained within $\partial B(n_1,\ldots,n_k) \times A$, implying that $S' \subseteq \partial B(n_1,\ldots,n_k)$. Also, since no two elements of $A$ differ by $1$, we must have $x \sim y$ for all $x,y \in S'$.

It remains to show that $\abs{S'} = 2^k$. To accomplish this, observe that since all the elements of $S$ are distinct modulo $2$, we can define an involution $f \colon S \to S$ such that if $y = f(x)$, the quantity $x_i - y_i$ is odd if and only if $i = k+1$. In this case, since $x \sim y$, we must in fact have $\abs{x_{k+1} - y_{k+1}} = 1$. Moreover, since neither $x_{k+1}$ nor $y_{k+1}$ is equal to $c$, exactly one of $x_{k+1}$ and $y_{k+1}$ is in $A$. Thus,
\[\abs{S \cap (B(n_1,\ldots,n_k) \times A)} = \frac{\abs{S}}{2} = 2^k.\]
Finally, it is impossible for distinct $x,y \in S \cap (B(n_1,\ldots,n_k) \times A)$ to project to the same point in $S'$, since it would then be impossible for $x \sim y$ to hold. Thus $\abs{S'} = 2^k$ as well.
\end{proof}
We remark that the constant $2^k$ is tight. To see this, note that by iterating \cref{prop:appex,prop:ex}, the box $B(2^0+1,\ldots,2^{k-1}+1,2^k)$ is good. On the other hand, by iterating \cref{prop:removedim}, the box $B(2^0+1,\ldots,2^{k-1}+1)$ is not good.

We conclude with a complete listing of good boxes in dimension $k \leq 4$:
\begin{itemize}
\item If $k=1$: $B(1)$.
\item If $k=2$: $B(1, \infty)$, $B(2,2)$, and any smaller box (in the sense of \cref{prop:down}), where $\infty$ represents an arbitrarily large positive integer.
\item If $k=3$: $B(1, \infty, \infty)$, $B(2, 2, \infty)$, $B(2,3,4)$, and any smaller box.
\item If $k=4$: $B(1,\infty,\infty,\infty)$, $B(2,2,\infty,\infty)$, $B(2,3,4,\infty)$, $B(2,3,5,8)$, $B(2,4,4,6)$, $B(3,3,3,5)$, $B(3,3,4,4)$, and any smaller box.
\end{itemize}
The cases $k = 1,2$ above are either trivial or follow from \cref{prop:removedim}, while the cases $k=3,4$ require computer assistance (for more details see \url{https://osf.io/qdfa6/}). Except for $B(3,3,3,5)$, all constructions follow from \cref{prop:appex,prop:ex,prop:adddim}. For the purposes of generating large good boxes using \cref{prop:appex,prop:ex}, the box $B(3,3,3,5)$ performs strictly worse than $B(3,3,4,4)$ and thus does not yield better bounds for $\Theta(\bar C_{2n+1})$.

\section*{Appendix.\texorpdfstring{\enspace}{ }Proofs of \texorpdfstring{\cref{prop:rhomalpha,cor:ramseyshannon}}{Theorem \ref{prop:rhomalpha} and Corollary \ref{cor:ramseyshannon}}} \label{sec:appendix}
As remarked in the introduction, these proofs are minor modifications of those in \cite{EMT71, AO95}.
\subsection*{Proof of \texorpdfstring{\cref{prop:rhomalpha}}{Theorem \ref{prop:rhomalpha}}}
Since $H_1, H_2, \ldots, H_k$ are nonempty, $\Rhom(H_1, H_2,\ldots,H_k)-1$ is the maximum $m$ for which we can partition the edges of $K_m$ into graphs $G'_1, G'_2, \ldots, G'_k$ (all on the same vertex set as $K_m$) such that there does not exist a homomorphism $H_i \to G'_i$ for any $i$. Note that it is equivalent to allow the $G'_i$ to share edges, since given such a decomposition, we can delete edges until the $G'_i$ are edge-disjoint.

Given graphs $G_1, G_2, \ldots, G_k$ such that there is no homomorphism $H_i \to \bar G_i$ for any $i$, let $G'_i$ be the $\prod_{j \neq i} V(G_j)$-blowup of $\bar G_i$, i.e.\ the graph on vertex set $\prod_{i \in [k]} V(G_i)$ such that $(u_1, u_2, \ldots, u_k)$ is adjacent to $(v_1, v_2, \ldots, v_k)$ if and only if $u_iv_i \in E(\bar G_i)$. Since projection onto the $i$th coordinate yields a homomorphism $G'_i \to \bar G_i$, there is no homomorphism $H_i \to G'_i$ for any $i$. Also, we have $\bigcup_{i\in[k]} G'_i = \overline{G_1 \boxtimes G_2 \boxtimes \cdots \boxtimes G_k}$. Now, letting $m = \alpha(G_1 \boxtimes G_2 \boxtimes \cdots \boxtimes G_k)$, we find that $K_m$ is a subgraph of $\overline{G_1 \boxtimes G_2 \boxtimes \cdots \boxtimes G_k}$, so by taking induced subgraphs of the $G'_i$ we achieve the desired decomposition of $K_m$ and find that $m \leq \Rhom(H_1, H_2,\ldots,H_k)-1$.

Conversely, letting $m = \Rhom(H_1, H_2,\ldots,H_k)-1$ and taking a decomposition $K_m = \bigcup_{i \in [k]} G'_i$, it is straightforward to show that the diagonal is an independent set in $\bar G'_1 \boxtimes \bar G'_2 \boxtimes \cdots \boxtimes \bar G'_k$. Therefore, letting $G_i = \bar G'_i$, we find that there are no homomorphisms $H_i \to \bar G_i$ and $\alpha(G_1 \boxtimes G_2 \boxtimes \cdots \boxtimes G_k) \geq m$.

Finally, suppose all the $H_i$ are a connected graph $H$. Given graphs $G_1, G_2, \ldots, G_k$ with no homomorphisms $H \to \bar G_i$, let $G = \overline{\bar G_1 \sqcup \bar G_2 \sqcup \cdots \sqcup \bar G_k}$. Since $H$ is connected, there is no homomorphism $H \to \bar G$. Also, since $G_i$ is an induced subgraph of $G$ for all $i$, we find that $G_1 \boxtimes G_2 \boxtimes \cdots \boxtimes G_k$ is an induced subgraph of $G^{\boxtimes k}$. Therefore $\alpha(G_1 \boxtimes G_2 \boxtimes \cdots \boxtimes G_k) \leq \alpha(G^{\boxtimes k})$, meaning that the maximum can be achieved with all the $G_i$ equal.

\subsection*{Proof of \texorpdfstring{\cref{cor:ramseyshannon}}{Corollary \ref{cor:ramseyshannon}}}
First note that since $H$ is not edgeless, $\Rhom_k(H) \geq 2$, so
\[\lim_{k \to \infty} \Rhom_k(H)^{1/k} = \lim_{k\to\infty} (\Rhom_k(H)-1)^{1/k}.\]
We will now prove
\[\limsup_{k\to\infty} {(\Rhom_k(H)-1)^{1/k}} \overset{(1)}{\leq} \sup_{k\geq 1} {(\Rhom_k(H)-1)^{1/k}} \overset{(2)}{=} {\sup_{H \nrightarrow \bar G} \Theta(G)} \overset{(3)}{\leq} \liminf_{k\to\infty} {(\Rhom_k(H)-1)^{1/k}}.\]
Statement (1) is a basic property of limits. To prove (2), note that by \cref{prop:rhomalpha},
\[\sup_{k\geq 1} {(\Rhom_k(H)-1)^{1/k}} = \sup_{\substack{k\geq 1 \\ H \nrightarrow \bar G}} {\alpha(G^{\boxtimes k})^{1/k}} = \sup_{H \nrightarrow \bar G} \Theta(G).\]
Finally, (3) follows from the fact that for any graph $G$ with no homomorphism $H \to \bar G$, we have by \cref{prop:rhomalpha} that
\[\Theta(G) = \liminf_{k \to \infty} \alpha(G^{\boxtimes k})^{1/k} \leq \liminf_{k \to \infty} {(\Rhom_k(H)-1)^{1/k}}.\]

\section*{Acknowledgments}
This work was supported by a Princeton First-Year Fellowship and by the NSF Graduate Research Fellowships Program (grant number: DGE-2039656). The author would like to thank Noga Alon, Zach Hunter, Cyril Pujol, Varun Sivashankar, and two anonymous referees for helpful comments and discussions.

\printbibliography
\end{document}